\theoremstyle{plain}
\newtheorem{thm}{Theorem}[section]
\newtheorem*{thm*}{Theorem}
\newtheorem*{cor*}{Corollary}
\newtheorem{prop}[thm]{Proposition}
\newtheorem{lem}[thm]{Lemma}
\newtheorem{claim}{Claim}
\newtheorem*{claim*}{Claim}
\theoremstyle{definition}
\newtheorem{defn}[thm]{Definition}
\newtheorem{conj}[thm]{Conjecture}
\theoremstyle{remark}
\numberwithin{equation}{thm}
\def\Im{\operatorname{Im}}
\def\Ker{\operatorname{Ker}}
\def\Coker{\mathrm{Coker}}
\def\rank{\mathrm{rank}}
\def\a{\mathrm a}
\def\e{\mathrm{e}}
\def\m{\mathfrak m}
\def\q{\mathfrak q}
\newcommand{\rma}{\mathrm{a}}
\newcommand{\rme}{\mathrm{e}}
\newcommand{\rmH}{\mathrm{H}}
\newcommand{\rmK}{\mathrm{K}}
\newcommand{\calC}{\mathcal{C}}
\newcommand{\calR}{\mathcal{R}}
\newcommand{\fka}{\mathfrak{a}}
\newcommand{\fkm}{\mathfrak{m}}
\newcommand{\fkq}{\mathfrak{q}}
\newcommand{\mapright}[1]{%
\smash{\mathop{%
\hbox to 1cm{\rightarrowfill}}\limits^{#1}}}
\newcommand{\mapleft}[1]{%
\smash{\mathop{%
\hbox to 1cm{\leftarrowfill}}\limits_{#1}}}
\def\depth{\operatorname{depth}}
\begin{document}
\setlength{\baselineskip}{13.5pt}

\title[Almost Gorenstein graded rings]{When are the Rees algebras of parameter ideals almost Gorenstein graded rings?}

\author{Shiro Goto}
\address{Department of Mathematics, School of Science and Technology, Meiji University, 1-1-1 Higashi-mita, Tama-ku, Kawasaki 214-8571, Japan}
\email{shirogoto@gmail.com}
\author{Rahimi Mehran}
\address{Faculty of Mathematics and Computer Science, Kharazmi University, No. 43 Shahid Mofatteh Ave., Tehran, Iran}
\email{mehran${}_{-}$rahimy@yahoo.com}
\author{Naoki Taniguchi}
\address{Department of Mathematics, School of Science and Technology, Meiji University, 1-1-1 Higashi-mita, Tama-ku, Kawasaki 214-8571, Japan}\email{taniguti@math.meiji.ac.jp}
\author{Hoang Le Truong}
\address{Institute of Mathematics, Vietnam Academy of Science and Technology, 18 Hoang Quoc Viet Road, 10307 Hanoi, Vietnam}
\email{hltruong@math.ac.vn}

\thanks{2010 {\em Mathematics Subject Classification.} 13H10, 13H15, 13A30.}
\thanks{{\em Key words and phrases.} Cohen-Macaulay ring, Gorenstein ring, almost Gorenstein ring, parameter ideal, Rees algebra, canonical module} 
\thanks{The first author was partially supported by JSPS Grant-in-Aid for Scientific Research (C) 25400051. The third author was partially supported by Grant-in-Aid for JSPS Fellows 26-126 and by JSPS Research Fellow.
The fourth author was partially supported by Vietnam National Foundation for Science and Technology Development (NAFOSTED) under grant number 101.04-2014.15 and VAST.DLT 01/16-17}

\maketitle

\begin{abstract}
Let $A$ be a Cohen-Macaulay local ring with $\dim A = d\ge 3$, possessing the canonical module $\rmK_A$. Let  $a_1, a_2, \ldots, a_r$ ~$(3 \le r \le d)$ be a subsystem of parameters of $A$ and set $Q= (a_1, a_2, \ldots, a_r)$. It is shown that if the Rees algebra $\calR(Q)$ of $Q$ is an almost Gorenstein graded ring, then $A$ is a regular local ring and $a_1, a_2, \ldots, a_r$ is a part of a regular system of parameters of $A$.
\end{abstract}


\section{Introduction}\label{introduction}
The purpose of this note is to study the question of when the Rees algebras of ideals generated by subsystems of parameters in a Cohen-Macaulay local ring are almost Gorenstein graded rings.

For the last sixty years commutative algebra has been concentrated mostly in the study of Cohen-Macaulay rings/modules and experiences in our researches show that Gorenstein rings are rather isolated in the class of Cohen-Macaulay rings. Gorenstein local rings are, of course, defined by the finiteness of self-injective dimension. However there is a substantial gap between the conditions of the finiteness of self-injective dimension and the infiniteness of it. The notion of almost Gorenstein ring is an attempt to fill this gap and a desire to find a new class of Cohen-Macaulay rings which might be non-Gorenstein but still good, say the next to Gorenstein rings.

The notion of almost Gorenstein local ring in our sense dates back to the paper \cite{BF} of V. Barucci and R. Fr\"oberg in 1997, where they introduced the notion to one-dimensional analytically unramified local rings and developed a very beautiful theory of almost symmetric numerical semigroups. Because their definition is not flexible enough for the analysis of analytically ramified local rings, S. Goto, N. Matsuoka, and T. T. Phuong \cite{GMP} relaxed in 2013 the restriction and gave the definition of almost Gorenstein local rings for arbitrary but still one-dimensional Cohen-Macaulay local rings, using the first Hilbert coefficients of canonical ideals. In \cite{GMP} they constructed numerous examples of almost Gorenstein local rings which are analytically ramified, extending several results of \cite{BF}. However it might be the most striking achievement of \cite{GMP} that the paper prepared for the higher dimensional definition and opened the door led to the theory of higher dimension. In fact in 2015 S. Goto, R. Takahashi, and N. Taniguchi \cite{GTT}  gave the definition of almost Gorenstein local/graded rings of higher dimension and started the theory.

Let us recall their definition.

\begin{defn}[The local case]\label{1.1}
Let $(A,\fkm)$ be a Cohen-Macaulay local ring of dimension $d$, possessing the canonical module $\rmK_A$. Then we say that $A$ is an almost Gorenstein local ring, if there exists an exact sequence
$$
0 \to A \to \rmK_A \to C \to 0
$$
of $A$-modules such that either $C = (0)$ or $C \ne (0)$ and $\mu_A(C) = \e^0_\fkm(C)$, where  $\mu_A(C)$ denotes the number of elements in a minimal system of generators of $C$ and $$\e^0_\fkm(C) = \lim_{n\to \infty}(d-1)!{\cdot}\frac{\ell_A(C/\fkm^{n+1}C)}{n^{d-1}}$$ denotes the multiplicity of $C$ with respect to the maximal ideal $\fkm$ (here $\ell_A(*)$ stands for the length).
\end{defn}

Let us explain a little more about Definition \ref{1.1}. Let $(A,\fkm)$ be a Cohen-Macaulay local ring of dimension $d$ and assume that $A$ possesses the canonical module $\rmK_A$. The condition of Definition \ref{1.1} requires that  $A$ is embedded into   $\rmK_A$ and even though $A \ne \rmK_A$, the difference $C = \rmK_A/A$ between $\rmK_A$ and $A$ is an Ulrich $A$-module (cf. \cite{BHU}) and behaves well. Here we notice that for every exact sequence$$
0 \to A \to \rmK_A \to C \to 0
$$
of $A$-modules, $C$ is a Cohen-Macaulay $A$-module of dimension $d-1$, provided $C \ne (0)$ (\cite[Lemma 3.1 (2)]{GTT}).

\begin{defn}[The graded case]\label{1.2}
Let $R = \sum_{n \ge 0}R_n$ be a Cohen-Macaulay graded ring such that $A = R_0$ is a local ring. Assume that $A$ is a homomorphic image of a Gorenstein local ring and let $\rmK_R$ denote the graded canonical module of $R$. We set $d = \dim R$ and $a = \a(R)$ the $a$-invariant of $R$. Then we say that $R$ is an almost Gorenstein graded ring, if there exists an exact sequence 
$$
0 \to R \to \rmK_R(-a) \to C \to 0
$$
of graded $R$-modules such that either $C = (0)$ or $C \ne (0)$ and $\mu_R(C) = \e^0_M(C)$, where $M$ denotes the graded maximal ideal of $R$. 
\end{defn}

In Definition \ref{1.2} suppose $C \ne (0)$. Then $C$ is a Cohen-Macaulay graded $R$-module and $\dim_RC  = d -1$.  As 
$
\e_M^0(C) = \lim_{n\to\infty}(d-1)!{\cdot}\frac{\ell_R(C/M^{n+1}C)}{n^{d-1}},
$
we get $\e_{MR_M}^0(C_M)= \e^0_M(C)$, so that $C_M$ is an Ulrich $R_M$-module. Therefore since $\rmK_{R_M}= \left[\rmK_R\right]_M$, $R_M$ is an almost Gorenstein local ring if $R$ is an almost Gorenstein graded ring. The converse is not true in general (\cite[Theorems 2.7, 2.8]{GMTY}, \cite[Example 8.8]{GTT}).

The present research has been motivated by \cite{GMTY}  and comes from a natural question of when the Rees algebras of ideals and modules are almost Gorenstein graded rings. Here we notice that the condition of the almost Gorenstein property in Rees algebras is a rather strong restriction. For example, let $(A,\fkm)$ be a Gorenstein local ring with $d = \dim A \ge 3$ and let $Q$ be a parameter ideal of $A$. Then the Rees algebra $\calR (Q)$ of $Q$ is an almost Gorenstein graded ring if and only if $Q = \fkm$ (\cite[Theorem 8.3]{GTT}). Therefore when this is the case, $A$ is a regular local ring. This result was more closely analyzed in \cite{GMTY} and the authors showed among other results that when $Q$ is an ideal of generated by a subsystem $a_1, a_2, \ldots, a_r$ of parameter with $3 \le r \le d= \dim A$, the Rees algebra $\calR(Q)$ is an almost Gorenstein graded ring if and only if $A$ is a regular local ring and $a_1, a_2, \ldots, a_r$ is a part of a regular system of parameters of $A$, while $\calR(Q)_M$ is an almost Gorenstein local ring if and only if $A$ is  a regular local ring, where $M$ denotes the graded maximal ideal in $\calR(Q)$. We should note here that for all these results the authors assume the base local ring $A$ is a Gorenstein ring. It seems natural to ask if this assumption is really necessary. Because the almost Gorenstein property in Rees algebras is a  strong restriction, it might be enough just to assume that $A$ is a Cohen-Macaulay local ring which is a homomorphic image of a Gorenstein local ring.

The present paper answers this question affirmatively and our result is stated as follows.

\begin{thm}\label{1.3} Let $A$ be a Cohen-Macaulay local ring with $\dim A = d\ge 3$ and assume that $A$ is a homomorphic image of a Gorenstein local ring. Let  $a_1, a_2, \ldots, a_r$ ~$(3 \le r \le d)$ be a subsystem of parameters of $A$ and set $Q= (a_1, a_2, \ldots, a_r)$. Then the following conditions are equivalent. 
\begin{enumerate}[$(1)$]
\item The Rees algebra $\calR(Q)$ of $Q$ is an almost Gorenstein graded ring.
\item $A$ is a regular local ring and $a_1, a_2, \ldots, a_r$ is a part of a regular system of parameters of $A$.
\end{enumerate}
\end{thm}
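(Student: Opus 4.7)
Proof plan. The easy direction $(2) \Rightarrow (1)$ follows at once from the Gorenstein case of the theorem cited from [GMTY] in the Introduction, since a regular local ring is Gorenstein. The substance is $(1) \Rightarrow (2)$, and my plan is to reduce to the Gorenstein case by proving the intermediate statement that the almost Gorenstein hypothesis on $R := \calR(Q)$ already forces $A$ itself to be Gorenstein; once this is done, the cited theorem of [GMTY] gives the full conclusion of $(2)$.

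Since $a_1, \ldots, a_r$ is a regular sequence in the Cohen--Macaulay ring $A$, the ideal $Q$ is of linear type and $R \cong T/I$, where $T = A[X_1, \ldots, X_r]$ and $I$ is the ideal of $2 \times 2$ minors of $\left(\begin{smallmatrix} a_1 & \cdots & a_r \\ X_1 & \cdots & X_r \end{smallmatrix}\right)$. The Eagon--Northcott complex then provides a graded free $T$-resolution of $R$ of length $r - 1$, and dualizing it against $\rmK_T = \rmK_A \otimes_A T(-r)$ computes $\rmK_R = \Ext^{r-1}_T(R, \rmK_T)$ as a graded $R$-module with an explicit presentation in terms of $\rmK_A$. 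From this presentation I would read off the $a$-invariant $a := \a(R)$ as well as concrete descriptions, as $A$-modules, of the low-degree homogeneous pieces $[\rmK_R]_n$.

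Now apply the almost Gorenstein defining sequence $0 \to R \to \rmK_R(-a) \to C \to 0$ and pass to homogeneous components in a carefully chosen degree $n_0$, producing a short exact sequence of finitely generated $A$-modules in which the left term $[R]_{n_0} = Q^{n_0}$ is simple and the middle term is built explicitly from $\rmK_A$. The Ulrich condition $\mu_R(C) = \e^0_M(C)$ restricts each $[C]_n$ very strongly. Combining the sequences obtained from two distinct degrees---this is where the hypothesis $r \ge 3$ enters essentially, because the Eagon--Northcott resolution then has length $r - 1 \ge 2$ and $\rmK_R$ is presented by two nontrivial levels of relations---I would show that the Ulrich cokernel at the chosen degree is forced to vanish, yielding $A \cong \rmK_A$, i.e.\ that $A$ is Gorenstein. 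This collapse is the step I expect to be the main obstacle, as it requires delicate bookkeeping of how the Ulrich constraint on $C$ propagates across graded pieces and how the resulting $A$-module identities interact.

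Finally, with $A$ now Gorenstein, the Gorenstein case of the theorem quoted from [GMTY] in the Introduction applies and yields that $A$ is regular and that $a_1, \ldots, a_r$ is part of a regular system of parameters of $A$, completing the proof of $(1) \Rightarrow (2)$.
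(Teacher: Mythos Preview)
Your plan has a genuine gap at precisely the step you flag as the ``main obstacle.'' You propose to prove $(1)\Rightarrow(2)$ in two stages: first deduce that $A$ is Gorenstein from the almost Gorenstein property of $R=\calR(Q)$, and then quote the Gorenstein case from \cite{GMTY}. But the intermediate statement ``$R$ almost Gorenstein graded $\Rightarrow$ $A$ Gorenstein'' is exactly what the paper explicitly says it does \emph{not} establish; immediately after Theorem~\ref{1.3} the authors write that their method ``does not directly deduce the fact that $A$ is a Gorenstein ring once the Rees algebra $\calR(Q)$ is an almost Gorenstein graded ring,'' and they pose the general version as an open conjecture. So your plan hinges on a step the authors could not carry out.

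Concretely, your mechanism for that step is unconvincing. You want the Ulrich condition $\mu_R(C)=\e^0_M(C)$ to force some $[C]_{n_0}$ to vanish and hence $A\cong\rmK_A$. But the Ulrich condition is a global equality of two numerical invariants of the $d$-dimensional module $C$; it does not by itself annihilate any single graded piece. Indeed, in the paper's argument the degree-zero piece $C_0$ is not assumed to vanish; one works with it. There is no evident way to convert ``$\mu_R(C)=\e^0_M(C)$'' into ``$[C]_{n_0}=0$'' by degreewise bookkeeping alone.

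What the paper actually does is quite different and bypasses the Gorenstein step entirely. One passes from $C$ to the quotient $D=C/RC_0\cong(\rmK_R/R{\cdot}[\rmK_R]_1)(1)$, and shows from the Eagon--Northcott presentation that $D$ has an explicit $S$-presentation by a block lower-triangular matrix with rows $\underline{a}$ on the diagonal and $\underline{X}$ below. A separate lemma (Proposition~\ref{2.2}, built on a Koszul computation in Lemma~\ref{2.1}) shows that this forces $D$ to be Cohen--Macaulay of dimension $d$ with $Q^{r-2}D=0$. The Ulrich condition on $C$ then transfers to $D$ via the short exact sequence $0\to RC_0\to C\to D\to 0$, giving $\mu_R(D)=\e^0_N(D)$. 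Computing both sides explicitly from the presentation (after extending $a_1,\ldots,a_r$ to a full system of parameters $a_1,\ldots,a_d$ and choosing a reduction) yields
\[
(r-2)\,\mu_A(\rmK_A)=(r-2)\,\ell_A\bigl(A/(a_1,\ldots,a_d)\bigr),
\]
and since $\mu_A(\rmK_A)=\ell_A([\fkq:\fkm]/\fkq)$ with $\fkq=(a_1,\ldots,a_d)$, this gives $\fkq:\fkm=A$, i.e.\ $\fkm=\fkq$. Thus $A$ is regular and the $a_i$ are part of a regular system of parameters in one stroke, without ever isolating the Gorenstein property as an intermediate conclusion. The ingredient your plan is missing is this passage to $D$ and the Cohen--Macaulayness result (Proposition~\ref{2.2}) that makes the multiplicity computation possible.
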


As is stated above, our contribution in Theorem \ref{1.3} is the implication (1) $\Rightarrow$ (2) under the weaker assumption that $A$ is a Cohen-Macaulay local ring which is  a homomorphic image of a Gorenstein local ring. The implication (2) $\Rightarrow$ (1) is due to \cite[Theorem 2.8]{GMTY}. Our method of proof of Theorem \ref{1.3} is to give a whole proof of the implication (1) $\Rightarrow$ (2) and does not directly deduce the fact that $A$ is a Gorenstein ring once the Rees algebra $\calR(Q)$ is an almost Gorenstein graded ring. Therefore the following conjecture is still open.

\begin{conj}
Let $A$ be a Cohen-Macaulay local ring and assume that $A$ is a homomorphic image of a Gorenstein local ring. Let $I \subsetneq A$ be an ideal of $A$ with $\mathrm{ht}_AI \ge 2$. If the Rees algebra $\calR(I)$ of $I$ is an almost Gorenstein graded ring, then $A$ is a Gorenstein ring.
\end{conj}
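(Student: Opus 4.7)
This conjecture is explicitly marked as open in the paper, so any plan is necessarily speculative; what follows is the approach I would first attempt, modelled on the proof of Theorem \ref{1.3}. Set $R=\calR(I)$ with graded maximal ideal $\calM$ and $a=\a(R)$.

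The first step is to extract structural consequences of the almost Gorenstein graded hypothesis. Since $R$ is in particular Cohen-Macaulay and $\mathrm{ht}_AI\ge 2$, standard results give $a\le -1$, and $\rmK_R=\bigoplus_n\omega_n t^n$ admits a graded decomposition in which each $\omega_n$ is an $A$-submodule of a suitable twist of $\rmK_A$, tightly linked to the powers of $I$. The almost Gorenstein exact sequence
$$
0\to R\to \rmK_R(-a)\to C\to 0
$$
restricted to degree zero yields $0\to A\to \omega_{-a}\to C_0\to 0$, which forces $A$ to be generically Gorenstein and embeds $A$ into an $A$-module closely related to $\rmK_A$. If $C=0$ then $R$ is Gorenstein, and classical results of Ikeda-Trung type already force $A$ to be Gorenstein.

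The substantive case is $C\ne 0$. Here I would try to show that $C$ is generated in degree zero by $C_0$ as an $R$-module, so that $\mu_R(C)=\mu_A(C_0)$, and then confront this with the Ulrich equality $\mu_R(C)=\e^0_\calM(C)$. The plan is to compute $\e^0_\calM(C)$ from the Hilbert polynomial of $C$, which mixes the $\fkm$-adic behaviour of $\rmK_A$ with the $I$-adic filtration on $A$; one then argues that whenever $A$ is not Gorenstein (so the Cohen-Macaulay type $r(A)\ge 2$), the combinatorial growth of the Hilbert function of $C$, driven by $\mathrm{ht}_AI\ge 2$, forces $\e^0_\calM(C)>\mu_A(C_0)$, contradicting the Ulrich property.

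The chief obstacle is the absence of an explicit formula for $\rmK_R$ and for the pieces $\omega_n$ when $I$ is a general ideal of height $\ge 2$; for complete intersections, as in Theorem \ref{1.3}, one has the formulae that drive that proof, but beyond such special classes one is essentially limited to the abstract duality $\rmK_R\cong\Hom_A(\rmH^{d+1}_\calM(R),\E_A(A/\fkm))$, which is not easy to analyze. Reasonable intermediate goals are to prove the conjecture for ideals with tractable Rees algebras, for example equimultiple ideals, ideals of linear type, or perfect ideals of grade $2$, and to pass via the standard exact sequence $0\to I\calR(I)\to\calR(I)\to A\to 0$ to the associated graded ring $\mathrm{gr}_I(A)$, translating the almost Gorenstein condition on $R$ into a condition on $\mathrm{gr}_I(A)$ strong enough to force $A$ to be Gorenstein.
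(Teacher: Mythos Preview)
You are right that the paper offers no proof of this conjecture; it is stated immediately after Theorem \ref{1.3} as still open, so there is nothing in the paper to compare your outline against. That said, one concrete step in your plan is already contradicted by the special case the paper does settle.

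You propose to show that $C$ is generated in degree zero, so that $\mu_R(C)=\mu_A(C_0)$. But in the proof of Theorem \ref{1.3}, for a parameter ideal $Q$ with $r\ge 3$, the paper sets $D=C/RC_0$ and computes $\mu_S(D)=(r-2)\,\mu_A(\rmK_A)>0$; hence $C$ is \emph{not} generated in degree zero even in this best-understood case. So the equality $\mu_R(C)=\mu_A(C_0)$ you aim for simply fails, and any argument built on it cannot recover even the known instance of the conjecture.

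The paper's strategy runs in the opposite direction: rather than hoping $RC_0=C$, one passes to the quotient $D=C/RC_0$, observes (Claim \ref{claim}) that the Ulrich equality $\mu_R(C)=\e^0_M(C)$ descends to $\mu_R(D)=\e^0_N(D)$ because both $RC_0$ and $D$ are Cohen--Macaulay of the right dimension, and then computes both sides of this equality for $D$ from an explicit presentation. The payoff is the identity $\mu_A(\rmK_A)=\ell_A(A/\fkq)$, which combined with $\mu_A(\rmK_A)=\ell_A([\fkq:\fkm]/\fkq)$ forces $\fkq=\fkm$. If you want to push toward the general conjecture along these lines, the natural move is to keep the passage to $D$ (or to an analogous graded quotient stripping off the lowest-degree generators of $\rmK_R$) and try to extract a numerical identity of the same shape; your diagnosis that the obstacle is the lack of an explicit presentation of $\rmK_R$ for general $I$ is exactly right, and your suggested test classes (equimultiple ideals, ideals of linear type, perfect grade-two ideals) are reasonable places to look for such presentations.
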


To prove Theorem \ref{1.3} we need some preliminaries which we summarize in Section \ref{preliminaries}. We shall prove Theorem \ref{1.3} in Section \ref{proofofmaintheorem}.


\section{Preliminaries}\label{preliminaries}

This section is devoted to preliminaries which we need  to prove Theorem \ref{1.3}.

Let $A$ be an arbitrary commutative ring and let $L$ be an $A$-module. Let $n$ and $\ell$ be positive integers and choose elements $x_1, x_2, \ldots, x_{\ell}, a_1, a_2, \ldots, a_{\ell}$ of $A$. Set $\underline{a} = a_1, a_2, \ldots, a_{\ell}$ and $\underline{x} = x_1, x_2, \ldots, x_{\ell}$. For each $\xi =
\left(\begin{smallmatrix}
f_1\\
f_2\\
\vdots\\
f_\ell
\end{smallmatrix}\right)\in L^{\oplus \ell}$ we set
$\underline{a}\xi =\sum_{i=1}^\ell a_if_i$ and $\underline{x}\xi =\sum_{i=1}^\ell x_if_i$ in $L$ and consider the $A$-linear map
$
\varphi : \left(L^{\oplus \ell}\right)^{\oplus n} \longrightarrow L^{\oplus n}
$ given by the $n \times n\ell$ matrix
{\footnotesize
$$ 
\Bbb A=
\begin{pmatrix}
\underline{a} &  &  &  &   \\
\underline{x} & \underline{a} &    &  &   \\
   & \ddots  & \ddots  \\
   &   &    \underline{x}  &  \underline{a} 
\end{pmatrix},
$$}
that is $\varphi(\left(\begin{smallmatrix}
\xi_1\\
\xi_2\\
\vdots\\
\xi_n
\end{smallmatrix}\right)) =\left(\begin{smallmatrix}
\underline{a}\xi_1\\
\underline{x}\xi_1 + \underline{a}\xi_2\\
\vdots\\
\underline{x}\xi_{n-1}+\underline{a}\xi_n
\end{smallmatrix}\right)$ for each $\left(\begin{smallmatrix}
\xi_1\\
\xi_2\\
\vdots\\
\xi_n
\end{smallmatrix}\right) \in (L^{\oplus \ell})^{\oplus n}$. 
Let $Q=(a_1, a_2, \ldots, a_{\ell})$ and let $\rmH_1(\underline{a}; L)$ denote the first Koszul homology module of $L$ generated by  the sequence $\underline{a}=a_1, a_2, \ldots, a_\ell$.  With this notation  we have the following.

\begin{lem}\label{2.1}
Let $\xi_1, \xi_2, \ldots, \xi_n \in L^{\oplus \ell}$ and suppose that 
$\left(\begin{smallmatrix}
\xi_1 \\
\xi_2\\
\vdots \\
\xi_n
\end{smallmatrix}\right) \in \Ker \varphi$. If $\rmH_1(\underline{a}; L) = (0)$, then $\underline{x}\xi_n \in QL$.
\end{lem}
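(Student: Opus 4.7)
The plan is to reinterpret the kernel conditions inside the Koszul complex $K_\bullet(\underline{a}; L)$ of the sequence $\underline{a}$ and then use the hypothesis $\rmH_1(\underline{a}; L) = 0$ to push the cycles $\xi_k$ one-by-one down to Koszul boundaries, with the coefficients $\underline{x}$ promoted to a secondary Koszul-type map.

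Concretely, I would let $K_i = L^{\oplus \binom{\ell}{i}}$ with the usual Koszul differentials $d_1 = \underline{a}$, i.e.\ $d_1(e_i) = a_i$, and $d_2(e_i \wedge e_j) = a_i e_j - a_j e_i$, so that the hypothesis reads $\Ker d_1 = \Im d_2$. In addition, I would introduce the auxiliary $A$-linear maps built from $\underline{x}$, namely $\tilde{x} : K_1 \to K_0$ given by $\tilde{x}(\xi) = \underline{x}\xi$ and $\tilde{\tilde{x}} : K_2 \to K_1$ given by $\tilde{\tilde{x}}(e_i \wedge e_j) = x_i e_j - x_j e_i$. A direct computation yields the two identities
\[
\tilde{x} \circ d_2 \;=\; -\,d_1 \circ \tilde{\tilde{x}}, \qquad \tilde{x} \circ \tilde{\tilde{x}} \;=\; 0,
\]
and these drive the entire argument.

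The kernel equations translate to $d_1(\xi_1) = 0$ together with $d_1(\xi_k) = -\tilde{x}(\xi_{k-1})$ for $2 \le k \le n$, so I would prove by induction on $k$ that there exist $\zeta_k \in K_2$ with
\[
\xi_k \;=\; d_2(\zeta_k) + \eta_{k-1}, \qquad \eta_j := \tilde{\tilde{x}}(\zeta_j)\ (j \ge 1), \quad \eta_0 := 0,
\]
and, as a corollary, $\tilde{x}(\xi_k) = -d_1(\eta_k)$. The case $k = 1$ is immediate from $d_1(\xi_1) = 0$ and $\rmH_1(\underline{a}; L) = 0$. For the inductive step, the induction hypothesis supplies $\tilde{x}(\xi_{k-1}) = -d_1(\eta_{k-1})$, so the kernel equation $d_1(\xi_k) = -\tilde{x}(\xi_{k-1})$ becomes $d_1(\xi_k - \eta_{k-1}) = 0$; the hypothesis then yields $\zeta_k \in K_2$ with $\xi_k - \eta_{k-1} = d_2(\zeta_k)$, and applying $\tilde{x}$ together with the two identities (in particular $\tilde{x}(\eta_{k-1}) = \tilde{x}\tilde{\tilde{x}}(\zeta_{k-1}) = 0$) delivers $\tilde{x}(\xi_k) = -d_1(\eta_k)$.

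Taking $k = n$ gives $\underline{x}\xi_n = \tilde{x}(\xi_n) = -d_1(\eta_n) = -\underline{a}\eta_n \in QL$, which is the desired conclusion. I do not anticipate any serious obstacle beyond the initial setup; the only subtle point is recognizing that one needs the secondary map $\tilde{\tilde{x}}$ and, crucially, the identity $\tilde{x}\circ \tilde{\tilde{x}} = 0$, since this is precisely what makes the $\eta$-terms invisible to $\tilde{x}$ and lets the induction close at each stage.
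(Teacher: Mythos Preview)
Your proof is correct and follows essentially the same approach as the paper: both arguments set up the two Koszul differentials $\partial^{\underline{a}}$ and $\partial^{\underline{x}}$ (your $d_i$ and $\tilde{x},\tilde{\tilde{x}}$), exploit the identities $\partial_1^{\underline{x}}\partial_2^{\underline{a}} = -\partial_1^{\underline{a}}\partial_2^{\underline{x}}$ and $\partial_1^{\underline{x}}\partial_2^{\underline{x}}=0$, and prove inductively that $\xi_k = \partial_2^{\underline{a}}(\rho_k) + \partial_2^{\underline{x}}(\rho_{k-1})$ before reading off $\underline{x}\xi_n \in QL$ at the final step. The only cosmetic difference is notation.
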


\begin{proof}
When $n=1$, the assertion is clear. 
Suppose that $n>1$. We consider two Koszul complexes $\rmK_{\bullet}(\underline{a}; L)$ and $\rmK_{\bullet}(\underline{x}; L)$ of $L$ generated by $\underline{a} = a_1, a_2, \ldots, a_{\ell}$ and $\underline{x}= x_1, x_2, \ldots, x_{\ell}$, respectively. More precisely, let $F$ be a finitely generated free $A$-module with $\rank_AF = \ell$ and a free basis $\{T_i\}_{1 \le i \le \ell}$. Let $K =\wedge F$ be the exterior algebra of $F$ and consider two differentiations $\partial^{\underline{a}}$ and $\partial^{\underline{x}}$ on $K$ such that
\begin{center}
$\partial_1^{\underline{a}}(T_i) = a_i$ \   and \   $\partial_1^{\underline{x}}(T_i) = x_i$ 
\end{center}
for all $1 \le i \le \ell$, making $K$ into the Koszul complexes $\rmK_\bullet (\underline{a};A)$ and $\rmK_\bullet(\underline{x};A)$, respectively. For simplicity let us denote  by $\partial^{\underline{a}}$ and $\partial^{\underline{x}}$ also the differentiations of the Koszul complexes $\rmK_\bullet(\underline{a};L) = \rmK_\bullet(\underline{a};A)\otimes_AL$ and $\rmK_\bullet(\underline{x};L) = \rmK_\bullet(\underline{x};A)\otimes_AL$, respectively.

Let us now suppose that 
$\left(\begin{smallmatrix}
\xi_1 \\
\xi_2\\
\vdots \\
\xi_n
\end{smallmatrix}\right) \in \Ker \varphi$. We set $K_2 = \wedge^2F$ and write  $\xi_{\alpha} = \left(\begin{smallmatrix}\xi_{\alpha 1} \\ \xi_{\alpha 2}\\\vdots \\ \xi_{\alpha\ell} \end{smallmatrix}\right)$ for each $1 \le \alpha \le n$. Then since $\partial^{\underline{a}}_1\left(\sum_{i=1}^{\ell}T_i\otimes \xi_{1i}\right)=\underline{a}{\cdot}\xi_1=0$, there exists elements $\rho_1, \rho_0 \in \rmK_2\otimes_AL$ such that 
$$
\sum_{i=1}^{\ell}T_i\otimes\xi_{1i} = \partial^{\underline{a}}_2(\rho_1) + \partial^{\underline{x}}_2(\rho_0)
$$
(take $\rho_0 = 0$ to be the initial data). We then have
\begin{eqnarray*}
0 = \underline{x}\xi_1 + \underline{a}\xi_{2} &=& \partial^{\underline{x}}_1(\sum_{i=1}^{\ell}T_i\otimes \xi_{1i}) + \underline{a}\xi_2 \\
 &=& \partial^{\underline{x}}_1(\partial^{\underline{a}}_2(\rho_1)) + \underline{a}\xi_2 \\
 &=& - \partial^{\underline{a}}_1(\partial^{\underline{x}}_2(\rho_1)) + \partial^{\underline{a}}_1(\sum_{i=1}^{\ell}T_i\otimes \xi_{2i}) \\
 &=& \partial^{\underline{a}}_1\left(\sum_{i=1}^{\ell} T_i\otimes \xi_{2i} - \partial^{\underline{x}}_2(\rho_1)\right).
\end{eqnarray*}
Because $\rmH_1(\underline{a};L) = (0)$, we get 
$\sum_{i=1}^{\ell}T_i\otimes \xi_{2i} - \partial^{\underline{x}}_2(\rho_1) = \partial^{\underline{a}}_2(\rho_2)$
for some $\rho_2 \in \rmK_2\otimes_AL$,
whence
$$
\sum_{i=1}^{\ell}T_i\otimes \xi_{2i}= \partial^{\underline{a}}_2(\rho_2) + \partial^{\underline{x}}_2(\rho_1)
$$
with $\rho_2, \rho_1 \in \rmK_2\otimes_AL$.
Repeat this procedure and we have
$$
\sum_{i=1}^{\ell}T_i\otimes \xi_{ni}= \partial^{\underline{a}}_2(\rho_n) + \partial^{\underline{x}}_2(\rho_{n-1})
$$
for some $\rho_n, \rho_{n-1}\in \rmK_2\otimes_AL$.
Consequently 
$$
\underline{x}\xi_n = \partial_1^{\underline{x}}(\sum_{i=1}^{\ell}T_i\otimes \xi_{ni}) = \partial^{\underline{x}}_1(\partial^{\underline{a}}_2(\rho_n)) \in QL
$$
as claimed.
\end{proof}

We now furthermore assume that $(A, \m)$ is a Noetherian local ring and that $L$ is a non-zero finitely generated $A$-module. We set $D = \Coker \varphi$ and let $\varepsilon:  L^{\oplus n} \overset{\varepsilon}{\longrightarrow} D$ denote the canonical map. Hence we get the exact sequence
$$
\left(L^{\oplus \ell}\right)^{\oplus n} \overset{\varphi}{\longrightarrow} L^{\oplus n} \overset{\varepsilon}{\longrightarrow} D  \longrightarrow 0.
$$

\begin{prop}\label{2.2}
Suppose that $\underline{a}=a_1, a_2, \ldots, a_{\ell}$ forms an $L$-regular sequence. Then 
\begin{enumerate}[$(1)$]
\item $D \ne (0)$, $Q^n D=(0)$, $\dim_A D = \dim_A L - \ell$, and $\depth_A D = \depth_A L - \ell$. 
\item If $L$ is a Cohen-Macaulay $A$-module, then $D$ is a Cohen-Macaulay $A$-module with $\dim_A L - \ell$.
\end{enumerate}
\end{prop}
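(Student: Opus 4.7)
The plan is to construct a short exact sequence
\[
0 \longrightarrow D' \longrightarrow D \longrightarrow L/QL \longrightarrow 0,
\]
where $D'$ denotes the cokernel of the analogous map with $n$ replaced by $n-1$, and then to induct on $n$. The base case $n=1$ is trivial since $D = L/QL$.

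Composing projection $L^{\oplus n} \twoheadrightarrow L$ onto the first coordinate with the canonical surjection $L \twoheadrightarrow L/QL$ descends to a surjection $D \twoheadrightarrow L/QL$, because the first component of the image of $\varphi$ lies in $\underline{a}L^{\oplus\ell} = QL$. The identity $\varphi(0,\eta_1,\ldots,\eta_{n-1}) = (0,\varphi'(\eta_1,\ldots,\eta_{n-1}))$ ensures that $L^{\oplus(n-1)} \hookrightarrow L^{\oplus n} \twoheadrightarrow D$ factors through $D'$ with image inside the kernel of the first-coordinate projection. The key step, injectivity of $D' \to D$, reduces to showing: if $\varphi(\xi_1,\ldots,\xi_n) = (0,y_2,\ldots,y_n)$ then $(y_2,\ldots,y_n) \in \Im \varphi'$. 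Now $\underline{a}\xi_1 = 0$, and since $\underline{a}$ is $L$-regular we have $\rmH_1(\underline{a};L) = (0)$, so $\sum_i T_i\otimes\xi_{1i} = \partial^{\underline{a}}_2(\rho)$ for some $\rho \in \rmK_2 \otimes_A L$. The Koszul anticommutation $\partial^{\underline{a}}_1 \partial^{\underline{x}}_2 + \partial^{\underline{x}}_1 \partial^{\underline{a}}_2 = 0$ gives $\underline{x}\xi_1 = -\underline{a}\partial^{\underline{x}}_2(\rho)$, so setting $\eta_1 := \xi_2 - \partial^{\underline{x}}_2(\rho)$ and $\eta_i := \xi_{i+1}$ for $2 \le i \le n-1$, a direct check (using also $\partial^{\underline{x}}_1 \partial^{\underline{x}}_2 = 0$) verifies $\varphi'(\eta_1,\ldots,\eta_{n-1}) = (y_2,\ldots,y_n)$. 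This is the same mechanism driving Lemma \ref{2.1}, and it is the main technical obstacle of the proof.

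With the short exact sequence in hand, induction on $n$ delivers (1). Since $L/QL \ne (0)$ by Nakayama, $D \ne (0)$; $Q^n$ annihilates $D$ because $Q$ kills $L/QL$ and, by induction, $Q^{n-1}$ kills $D'$; and $\dim_A D = \max(\dim_A D', \dim_A L/QL) = \dim_A L - \ell$ by the $L$-regularity of $\underline{a}$. For depth, set $e := \depth_A L - \ell$; the depth lemma gives $\depth_A D \ge e$, while the long exact sequence of $\Ext^\bullet_A(A/\fkm, -)$, combined with $\Ext^{e-1}_A(A/\fkm, L/QL) = (0)$ and $\Ext^e_A(A/\fkm, D') \ne (0)$ (inductive hypothesis), forces $\Ext^e_A(A/\fkm, D) \ne (0)$, whence $\depth_A D \le e$. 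Thus $\depth_A D = e$. Part (2) is then immediate: if $L$ is Cohen-Macaulay, $\depth_A L = \dim_A L$, so $\depth_A D = \dim_A D$.
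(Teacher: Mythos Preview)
Your proof is correct, and the overall strategy---induction on $n$ via a short exact sequence linking $D$ to its $(n-1)$-analogue and to $L/QL$---is the same as the paper's. The decomposition, however, is flipped: the paper embeds $L/QL$ into $D$ through the \emph{last} coordinate, obtaining
\[
0 \to L/QL \to D \to \overline{D} \to 0
\]
with $\overline{D}$ the $(n-1)$-cokernel, whereas you project onto the \emph{first} coordinate and get
\[
0 \to D' \to D \to L/QL \to 0.
\]
To verify $\Ker\alpha = QL$ the paper invokes the full recursion of Lemma~\ref{2.1}; your orientation needs only the first step of that Koszul computation (handling $\xi_1$ alone) to establish injectivity of $D'\to D$, so in that sense your argument is marginally more economical. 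On the other hand, the paper's orientation makes the surjection $D\to\overline{D}$ automatic, while you must separately check that the image of $D'\to D$ fills the whole kernel of $D\to L/QL$. Your depth argument via the long exact sequence of $\Ext^\bullet_A(A/\fkm,-)$ is exactly what the paper's phrase ``directly follows from the hypothesis of induction'' is hiding, and it works equally well for either filtration.
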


\begin{proof}
Assertion (2) readily follows from assertion $(1)$. We  prove  assertion (1)  by induction on $n$.
If $n=1$, then $D \cong L/QL$ and we have nothing to prove. Suppose that $n>1$ and assertion $(1)$ holds true for $n-1$. Let $\beta : L \to L^{\oplus n}$ be the homomorphism defined by $\beta(y) = \left(\begin{smallmatrix}
0\\
\vdots\\
0\\
y\\
\end{smallmatrix}\right)$ for each $y \in L$ and consider the composite map
$$
\alpha : L \overset{\beta}{\to} L^{\oplus n} \overset{\varepsilon}{\to} D,~~y \mapsto \overline{
\left(\begin{smallmatrix}
0\\
\vdots\\
0\\
y
\end{smallmatrix}\right)}
$$
where $\overline{z}$ denotes for each $z \in L^{\oplus n}$ the image of $z$ in $D$. 
We set $E=\Im \alpha$ and $\overline{D} = D/E$.
We then have  the exact sequence
$$
\left(L^{\oplus \ell}\right)^{\oplus (n-1)} \overset{\psi}{\longrightarrow} L^{\oplus (n-1)} \longrightarrow \overline{D} \longrightarrow 0
$$
of $A$-modules, where $\psi$ is given by the $(n-1)\times (n-1)\ell $ matrix
{\footnotesize
$$ \Bbb B=
\begin{pmatrix}
\underline{a} &  &  &  &   \\
\underline{x} & \underline{a} &    &  &   \\
   & \ddots  & \ddots  \\
   &   &    \underline{x}  &  \underline{a} 
\end{pmatrix}.
$$}
Therefore, thanks to the exact sequence $0 \to E \to D \to \overline{D} \to 0$,  assertion $(1)$ directly follows from the hypothesis of induction, once we get $E \cong L/QL$. Let $y \in L$ and notice that 
\begin{center}
$y \in \Ker \alpha$ \  if and only if  \ 
$\begin{pmatrix}
0\\
\vdots\\
0\\
y
\end{pmatrix}=
\begin{pmatrix}
\underline{a} &  &  &  &   \\
\underline{x} & \underline{a} &    &  &   \\
   & \ddots  & \ddots  \\
   &   &    \underline{x}  &  \underline{a} 
\end{pmatrix}
\begin{pmatrix}
\xi_1\\
\xi_2\\
\vdots\\
\xi_n
\end{pmatrix}$
\end{center}
for some $\xi_1, \xi_2, \ldots, \xi_n \in L^{\oplus \ell}$. When this is the case,  $\underline{x}\xi_{n-1} \in QL$ by Lemma \ref{2.1}, so that $y = \underline{x}\xi_{n-1} + \underline{a}\xi_n \in QL$. If $y \in QL$, then
\begin{center}
$\begin{pmatrix}
0\\
\vdots\\
0\\
y
\end{pmatrix}=
\begin{pmatrix}
\underline{a} &  &  &  &   \\
\underline{x} & \underline{a} &    &  &   \\
   & \ddots  & \ddots  \\
   &   &    \underline{x}  &  \underline{a} 
\end{pmatrix}
\begin{pmatrix}
0\\
\vdots\\
0\\
\xi
\end{pmatrix}$
\end{center}
for some $\xi \in L^{\oplus \ell}$. Thus $\Ker \alpha =QL$. Hence $E \cong L/QL$ as wanted.
\end{proof}


\section{Proof of Theorem \ref{1.3}}\label{proofofmaintheorem}

The purpose of this section is to prove Theorem \ref{1.3}. Let $(A, \m)$ be a Cohen-Macaulay local ring with $d= \dim A \ge 3$. 
Let $a_1, a_2, \ldots, a_r ~ (r \ge 3)$ be a subsystem of parameters of $A$ and set $Q = (a_1, a_2, \ldots, a_r)$. We denote by
$$
R = \calR(Q) = A[Qt] \subseteq A[t]
$$
the Rees algebra of $Q$ where $t$ stands for an indeterminate over $A$. Hence $R$ is a Cohen-Macaulay ring with $\dim R = d+1$ and $a(R) = -1$.
Let $S=A[X_1, X_2, \ldots, X_r]$ be the  polynomial ring which we consider to be a standard graded $A$-algebra and set $N = \fkm S + S_+$, the graded maximal ideal of $S$. Let $\varphi : S \longrightarrow R$ be the homomorphism of $A$-algebras defined by  $\varphi(X_i) = a_i t$ for each $1 \le i \le r$. We set 
$$
{\Bbb X} =\begin{pmatrix}
X_1 & X_2 & \cdots & X_r \\
a_1 & a_2 & \cdots & a_r 
\end{pmatrix}.
$$  
Then $\Ker \varphi$ is generated by the $2 \times 2$ minors of the matrix $\Bbb X$, that is 
$$
\Ker \varphi = {\rm I}_2
\begin{pmatrix}
X_1 & X_2 & \cdots & X_r \\
a_1 & a_2 & \cdots & a_r 
\end{pmatrix}
$$
which is a perfect ideal of $S$ with grade $r-1$. Therefore a minimal graded $S$-free resolution of $R$ is given by the Eagon-Northcott complex associated to the matrix $\Bbb X$ (\cite{EN}).

For later use let us briefly recall the construction of the Eagon-Northcott complex. Let $F$ be a finitely generated free $S$-module with $\rank_SF = r$ and a free basis $\{T_i\}_{1 \le i \le r}$. We denote by $K = \wedge F$ the exterior algebra of $F$ over $S$. Let $\rmK_{\bullet}(X_1, X_2, \ldots, X_r; S)$ (resp. $\rmK_{\bullet}(a_1, a_2, \ldots, a_r; S)$) be the Koszul complex of $S$ generated by $X_1, X_2, \ldots, X_r$ (resp. $a_1, a_2, \ldots, a_r$) with differentiation $\partial_1$ (resp. $\partial_2$). Let $U = S[Y_1, Y_2]$ be the polynomial ring. We set $C_0 = S$ and $C_q = K_{q+1} \otimes_S U_{q-1}$ for each $1 \le q \le r-1$. Hence $C_q$ is a finitely generated free $S$-module with a free basis
$$
\{ T_{i_1}T_{i_2} \cdots T_{i_{q+1}} \otimes Y_1^{\nu_1}Y_2^{\nu_2} \mid 1 \le i_1 < i_2 < \cdots < i_{q+1} \le r,~\nu_1 + \nu_2 =  q-1 \}.
$$
We regard $C_q$ to be a graded $S$-module such that 
$$\deg(T_{i_1}T_{i_2}\ldots T_{i_{q+1}}\otimes Y_1^{\nu_1}Y_2^{\nu_2}) = \nu_1+1.$$
With this notation the Eagon-Northcott complex associated with $\Bbb X$ is defined to be a complex of graded $S$-modules of the form
$$
\calC_{\bullet} \ \ : \ \  0 \to C_{r-1} \overset{d_{r-1}}{\rightarrow} C_{r-2} \to \cdots \to C_1 \overset{d_{1}}{\rightarrow} C_0 \to 0
$$
where 
$$
d_q(T_{i_1}T_{i_2} \cdots T_{i_{q+1}} \otimes Y_1^{\nu_1}Y_2^{\nu_2}) = \sum_{j=1, 2 ~\text{and}~ \nu_j > 0} \partial_j(T_{i_1}T_{i_2} \cdots T_{i_{q+1}}) \otimes Y_1^{\nu_1}\cdots Y_j^{{\nu_j}-1} \cdots Y_2^{\nu_2}
$$
for $q \ge 2$ and 
$$
d_1(T_{i_1}T_{i_2}\otimes 1) = \det
\begin{pmatrix}
X_{i_1} & X_{i_2} \\
a_{i_1} & a_{i_2} 
\end{pmatrix},
$$
whence $\Im d_1= {\rm I }_2(\Bbb X) \subseteq S$.  Then the complex $C_\bullet$ is a graded minimal $S$-free resolution of $R$, since  ${\rm I}_2(\Bbb X)$ is a perfect ideal of grade $r-1$ and $X_i, a_i \in N = \m S + S_+$ for all $1 \le i \le r$ (\cite{EN}).

We are now especially interested in the presentation matrix $\Bbb M$ of the homomorphism $C_{r-1} \overset{d_{r-1}}{\longrightarrow} C_{r-2}$ with respect to the basis $$\{T_1T_2\cdots T_r\otimes Y_1^iY_2^{r-2-i}\}_{0 \le i \le r-2}$$ and $$\{T_1 \cdots \overset{\vee}{T_j} \cdots T_r \otimes Y_1^k Y_2^{r-3-k}\}_{1 \le j \le r, ~0 \le k \le r-3}$$ 
of $C_{r-1}$ and $C_{r-2}$, respectively. Notice that $\Bbb M$ is an $(r-2)r \times (r-1)$ matrix. Then a direct computation shows
{\footnotesize
$$
{}^t{\Bbb M} = 
\begin{pmatrix}
\underline{a} & 0 &  &  &  &  \\
\underline{X} & \underline{a} &    &  &  &  \\
   &   & \ddots  & \\
   &   &  &  &  \underline{X}  &  \underline{a}\\   
   &   &  &  &  0  &  \underline{X}   
\end{pmatrix}
$$}
where $\underline{a} = a_1, -a_2, \cdots, (-1)^{r+1} a_r$ and \ $\underline{X} = X_1, -X_2, \cdots, (-1)^{r+1} X_r$ and taking the $S(-r)$-dual of $d_{r-1}$ with acounting degrees, we get the homomorphism of graded $S$-modules
\begin{equation*}
\begin{matrix}
S\left(-r + 1\right)^{\oplus r} \\
\oplus \\
\vdots \\
\oplus \\
S\left(-2\right)^{\oplus r} 
\end{matrix}
\overset{{}^t\Bbb M}{\longrightarrow}
\begin{matrix}
S\left(-r + 1\right) \\
\oplus \\
\vdots \\
\oplus \\
S\left(-1\right). 
\end{matrix}
\end{equation*}


Now suppose that $A$ is a homomorphic image of a Gorenstein local ring and let $\rmK_A$ be the canonical module of $A$. We set $L=S \otimes_A \rmK_A$. Then $\rmK_S = L(-r)$, so that taking the $\rmK_S$-dual of the Eagon-Northcott resolution, we have the following presentation of the graded canonical module $\rmK_R$ of $R$.

\begin{prop}\label{3.1}
\begin{equation*}
\begin{matrix}
L\left(-r + 1\right)^{\oplus r} \\
\oplus \\
\vdots \\
\oplus \\
L\left(-2\right)^{\oplus r} 
\end{matrix}
\overset{{}^t\Bbb M}{\longrightarrow}
\begin{matrix}
L\left(-r + 1\right) \\
\oplus \\
\vdots \\
\oplus \\
L\left(-1\right) 
\end{matrix}
\overset{\varepsilon}{\longrightarrow}
\rmK_R \longrightarrow 0. \quad \quad 
\end{equation*}
\end{prop}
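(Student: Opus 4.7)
The plan is to compute $\rmK_R$ as $\Ext^{r-1}_S(R, \rmK_S)$ and then identify this $\Ext$-module with the cokernel that appears in the proposition. Since $\Ker\varphi = \rmI_2(\Bbb X)$ is a perfect $S$-ideal of grade $r-1$, the quotient $R = S/\Ker\varphi$ is a Cohen--Macaulay $S$-module of codimension $r-1$. Combined with the fact that $S$ is Cohen--Macaulay with canonical module $\rmK_S = L(-r)$, the standard theory of canonical modules for Cohen--Macaulay modules over Cohen--Macaulay (semi-)local rings gives the isomorphism $\rmK_R \cong \Ext^{r-1}_S(R, \rmK_S)$ of graded $R$-modules together with the vanishing $\Ext^{i}_S(R, \rmK_S) = 0$ for $i < r-1$. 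In particular, $\rmK_R$ can be computed as the cokernel of the $\rmK_S$-dual of the top differential $d_{r-1} \colon C_{r-1} \to C_{r-2}$ of the Eagon--Northcott complex $\calC_\bullet$.

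Next I would read off the degree shifts. By the grading convention $\deg(T_{i_1}\cdots T_{i_{q+1}} \otimes Y_1^{\nu_1} Y_2^{\nu_2}) = \nu_1 + 1$ introduced above, the given bases of $C_{r-1}$ and $C_{r-2}$ yield
$$
C_{r-1} \;\cong\; \bigoplus_{i=1}^{r-1} S(-i), \qquad C_{r-2} \;\cong\; \bigoplus_{i=1}^{r-2} S(-i)^{\oplus r}.
$$
Applying $\Hom_S(-, L(-r))$ and using $\Hom_S(S(-a), L(-r)) = L(a-r)$, one obtains exactly the source $\bigoplus_{i=1}^{r-2} L(i-r)^{\oplus r}$ and target $\bigoplus_{i=1}^{r-1} L(i-r)$ displayed in Proposition \ref{3.1}.

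It then remains to check that, in these chosen bases, the dualized map $\Hom_S(d_{r-1}, \rmK_S) \colon C_{r-2}^{\ast} \to C_{r-1}^{\ast}$ is represented by the matrix ${}^t\Bbb M$ written out just above the proposition. This is a direct, essentially formal, calculation from the explicit formula for $d_{r-1}$ in terms of the two differentiations $\partial^{\underline{a}}$ and $\partial^{\underline{x}}$ on the Koszul modules $K_r$ and $K_{r-1}$; indeed, precisely this identification produced $\Bbb M$ (and hence ${}^t\Bbb M$) in the preceding paragraph. The main obstacle, and really the only one, is the bookkeeping of signs coming from the exterior algebra (hence the alternating signs in $\underline{a}$ and $\underline{X}$) and of the various degree shifts; the conceptual content is entirely covered by (i) $R$ being a perfect ideal quotient of $S$ of the correct grade, and (ii) $\rmK_S = L(-r)$. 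Assembling these three ingredients---Cohen--Macaulay duality, the shift computation, and the matrix identification---yields the presentation in the statement.
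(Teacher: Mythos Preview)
Your proposal is correct and follows essentially the same approach as the paper: the paper simply notes that $\rmK_S = L(-r)$ and takes the $\rmK_S$-dual of the Eagon--Northcott resolution, which is exactly the computation of $\rmK_R \cong \Ext^{r-1}_S(R,\rmK_S)$ as the cokernel of $\Hom_S(d_{r-1},\rmK_S)$ that you spell out. You have added the explicit verification of the degree shifts and the Ext-vanishing justification, but the argument is the same.
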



We are now ready to prove Theorem \ref{1.1}.

\begin{proof}[Proof of $(1) \Rightarrow (2)$ in Theorem \ref{1.1}] 
Enlarging the residue class field of $A$ if necessary, we may assume the field $A/\fkm$ is infinite (\cite[Theorem 3.9]{GTT}). We choose an exact sequence
$$
0 \to R \overset{\psi}{\longrightarrow} \rmK_R(1) \to C \to 0
$$
of graded $R$-modules such that either $C=(0)$ or $C\ne (0)$ and $C_M$ is an Ulrich $R_M$-module (remember that $\rma (R) = -1$). We actually have  $C \ne (0)$, since $\mu_R(\rmK_R) =(r-1)\cdot \mu_A(\rmK_A) \ge 2$ by  Proposition \ref{3.1}. We set
$$
D=C/RC_0 \cong \left(\rmK_R/R{\cdot}[\rmK_R]_1\right)(1).
$$
Hence the sequence
$$
0 \to RC_0 \to C \to D \to 0 \ \ \ \ \ \ \mathrm{(}E\mathrm{)}
$$
of graded $R$-modules is exact and because $\psi(1) \in [\rmK_A]_1$, by Proposition \ref{3.1} we readily get  the presentation 
\begin{equation*}
\begin{matrix}
L\left(-r + 1\right)^{\oplus r} \\
\oplus \\
\vdots \\
\oplus \\
L\left(-2\right)^{\oplus r} 
\end{matrix}
\overset{\Bbb A}{\longrightarrow}
\begin{matrix}
L\left(-r + 1\right) \\
\oplus \\
\vdots \\
\oplus \\
L\left(-2\right) 
\end{matrix}
\overset{\varepsilon}{\longrightarrow}
D(-1)  \longrightarrow 0   
\end{equation*}
of $D(-1)=\rmK_R/R{\cdot}[\rmK_R]_1$ as a graded $S$-module, where $\Bbb A$ is an $(r-2)\times (r-2)r$ matrix of the form 
{\footnotesize
$$
\Bbb A = 
\begin{pmatrix}
\underline{a} &  &  &  &   \\
\underline{X} & \underline{a} &    &  &   \\
   & \ddots  & \ddots  \\
   &   &    \underline{X}  &  \underline{a} 
\end{pmatrix}.
$$}
Therefore $D$ is a Cohen-Macaulay $S$-module  with  $\dim_S D =d$ by Proposition \ref{2.2}. Setting $\fka = (X_1, X_2, \ldots, X_r)S$, by the above presentation of $D(-1)$  we get isomorphisms

{\small
\begin{eqnarray*}
(D/\fka D)(-1) \cong 
\begin{matrix}
\left(L/(Q+\fka)L\right)\left(-r + 1\right) \\
\oplus \\
\vdots \\
\oplus \\
\left(L/(Q+\fka)L\right)\left(-2\right) 
\end{matrix}
\cong
\begin{matrix}
\left(\rmK_A/ Q \rmK_A\right)\left(-r + 1\right) \\
\oplus \\
\vdots \\
\oplus \\
\left(\rmK_A/ Q \rmK_A\right)\left(-2\right) 
\end{matrix} \quad \quad\quad(*)
\end{eqnarray*}}
also.

\begin{claim}\label{claim}
$\rme^0_N(D)= \mu_R(D)$.
\end{claim}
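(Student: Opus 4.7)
The plan is to deduce the equality $\rme^0_N(D)=\mu_R(D)$ from the Ulrich property of $C_M$ by applying standard additivity/subadditivity properties to the exact sequence $(E):\ 0\to RC_0\to C\to D\to 0$, together with the general inequality $\mu_S(X)\le \rme^0_N(X)$ valid for any Cohen--Macaulay graded $S$-module $X$ of dimension $d$. Throughout one uses that $\mu_R(-)=\mu_S(-)$ and $\rme^0_M(-)=\rme^0_N(-)$ on $R$-modules, since $\Ker\varphi\subseteq N$ and $NR=M$.

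First I would record the Cohen--Macaulayness of each term of $(E)$. The module $D$ is Cohen--Macaulay of $S$-dimension $d$ by Proposition~\ref{2.2} applied with $L=S\otimes_A\rmK_A$: the sequence $a_1,\ldots,a_r$ is $L$-regular because $\rmK_A$ is Cohen--Macaulay of $A$-dimension $d$. The module $C$ is Cohen--Macaulay of $R$-dimension $d$ by \cite[Lemma~3.1(2)]{GTT}, as it is the cokernel of an embedding $R\hookrightarrow\rmK_R(1)$. Applying the depth lemma to $(E)$ then gives $\depth_S(RC_0)\ge d$; since $\dim_S(RC_0)\le\dim_S C=d$, either $RC_0=0$, or $RC_0$ is Cohen--Macaulay of $S$-dimension $d$.

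If $RC_0=0$ then $D=C$, and the Ulrich condition $\mu_R(C)=\rme^0_M(C)$ from Definition~\ref{1.2} immediately yields the claim. Otherwise, I would combine additivity of multiplicity on $(E)$ (valid because all three terms are Cohen--Macaulay of $S$-dimension $d$), subadditivity of the minimal number of generators (by tensoring $(E)$ with $R/M$), and the inequality $\mu_S(X)\le\rme^0_N(X)$ for any Cohen--Macaulay graded $S$-module $X$ of dimension $d$ (which follows from $\mu_S(X)=\ell_A(X/NX)\le\ell_A(X/\fkq X)=\rme^0_N(X)$ for a minimal reduction $\fkq$ of $N$ with respect to $X$) to obtain the chain
$$
\mu_R(RC_0)+\mu_R(D)\ \ge\ \mu_R(C)\ =\ \rme^0_N(C)\ =\ \rme^0_N(RC_0)+\rme^0_N(D)\ \ge\ \mu_R(RC_0)+\mu_R(D).
$$
Equality propagates through, and in particular $\mu_R(D)=\rme^0_N(D)$.

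The key technical point will be the depth-lemma step forcing $RC_0$ to be Cohen--Macaulay of dimension $d$ whenever it is non-zero; once that is in place, the rest is formal manipulation of Cohen--Macaulay invariants.
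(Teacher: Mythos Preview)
Your proposal is correct and follows essentially the same argument as the paper: both use the exact sequence $(E)$, the Cohen--Macaulayness of $C$, $D$, and $RC_0$ (the latter via the depth lemma), additivity of $\rme^0_N$, subadditivity of $\mu_R$, and the inequality $\mu_S\le\rme^0_N$ for Cohen--Macaulay modules of dimension $d$ to force equality throughout. The only difference is cosmetic: you spell out the justification of $\mu_S(X)\le\rme^0_N(X)$ via a minimal reduction and make the depth-lemma step for $RC_0$ explicit, whereas the paper states these facts without elaboration.
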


\begin{proof}[Proof of Claim \ref{claim}]
Since both $C$ and $D$ are Cohen-Macaulay $S$-modules with $\dim_SC = \dim_SD = d$, $RC_0$ is also a Cohen-Macaulay $S$-module of dimension $d$ if $ RC_0 \ne (0)$. Therefore $\mu_S(RC_0) \le \e_N^0(RC_0)$ and $\mu_S(D) \le \e_N^0(D)$, while by exact sequence ($E$) we get
 \begin{eqnarray*}
\rme_N^0(C)  &=& \rme^0_N(D) + \rme^0_N(R C_0) \quad \text{and} \\
\mu_R(C) &\le& \mu_R(D) + \mu_R(R C_0).
\end{eqnarray*}
Hence $\rme^0_N(D)= \mu_R(D)$ because $\rme^0_N(C) = \mu_R(C)$. 
\end{proof}

We have $Q^{r-2}D=(0)$ by Proposition  \ref{2.2}. Hence $D$ is a finitely generated graded $\overline{S}$-module, where
$$
\overline{S}=S/Q^{r-2}S = (A/Q^{r-2})[X_1, X_2, \ldots, X_r].
$$
We choose elements $a_{r+1}, a_{r+2}, \ldots, a_d \in \m$ so that the images of $a_{r+1}, a_{r+2}, \ldots, a_d$ in $A/Q^{r-2}$ generate a reduction of the maximal ideal $\m/Q^{r-2}$ of $A/Q^{r-2}$. Then because $$[(a_{r+1}, a_{r+2}, \ldots, a_d) + (X_1, X_2, \ldots,X_r)]\overline{S}$$ is a reduction of $N\overline{S}$ and $D$ is a Cohen-Macaulay $\overline{S}$-module with $\dim_{\overline{S}}D=d$, we get 
\begin{eqnarray*}
\rme_N^0(D) &=& \ell_A(D/[(a_{r+1}, a_{r+2}, \ldots, a_d) + (X_1, X_2, \ldots,X_r)]D)\\
                         &=& (r-2){\cdot}\ell_A(\rmK_A/(a_1, a_2, \ldots, a_d)\rmK_A) \\
                         &=& (r-2){\cdot}\ell_A(A/(a_1, a_2, \ldots, a_d))
\end{eqnarray*}
where the second equality follows from isomorphisms $(*)$, while $$ \mu_S(D) = (r-2) {\cdot}\mu_A(\rmK_A)$$ by isomorphisms $(*)$. Therefore because $$\mu_A(\rmK_A) = \ell_A([\fkq:\fkm]/\fkq)$$ where $\q = (a_1, a_2, \ldots, a_d)$ (\cite[Satz 6.10]{HK}) and because $r >2$, Claim \ref{claim} guarantees that 
$$
\ell_A(A/\fkq)=\ell_A([\fkq : \m]/ \fkq ).
$$
Consequently $A=\fkq : \m$, whence $\m=\fkq$. Thus  $A$ is a regular local ring and $a_1, a_2, \ldots, a_r$ is a part of a regular system of parameters of $A$.
\end{proof}




\end{document}